\newtheorem{Theorem}{Theorem}[section]
\newtheorem{Lemma}[Theorem]{Lemma}
\newtheorem{Definition}[Theorem]{Definition}
\newtheorem{Corollary}[Theorem]{Corollary}
\newtheorem{Proposition}[Theorem]{Proposition}
\newtheorem{Remark}[Theorem]{Remark}
\newtheorem{Conjecture}[Theorem]{Conjecture}
\title{Unistructurality of cluster algebras}
\author{Peigen Cao $\;\;\;\;\;\;$ Fang Li $\;\;\;\;\;\;$}
\address{Peigen Cao
\newline Department
of Mathematics, Zhejiang University (Yuquan Campus), Hangzhou, Zhejiang
310027,  P.R.China}
\email{peigencao@126.com}
\address{Fang Li
\newline Department
of Mathematics, Zhejiang University (Yuquan Campus), Hangzhou, Zhejiang
310027, P.R.China}
\email{fangli@zju.edu.cn}
\begin{document}
\renewcommand{\thefootnote}{\alph{footnote}}

\setcounter{footnote}{-1} \footnote{\emph{ Mathematics Subject
Classification(2010)}: 13F60, 05E40}
\renewcommand{\thefootnote}{\alph{footnote}}
\setcounter{footnote}{-1} \footnote{ \emph{Keywords}: Cluster algebras,
Unistructurality}

\begin{abstract}
We prove that any skew-symmetrizable cluster algebra is unistructural, which is a conjecture by  Assem,  Schiffler, and  Shramchenko. As a corollary, we obtain that a cluster automorphism of a cluster algebra $\mathcal A(\mathcal S)$ is just an automorphism of the ambient field $\mathcal F$ which restricts to a permutation of cluster variables of $\mathcal A(S)$.
\end{abstract}

\maketitle
\bigskip

\section{introduction}
A cluster algebra $\mathcal A(\mathcal S)$ is a subalgebra of an ambient field $\mathcal F$ generated by certain combinatorially defined generators (i.e.,  {\em cluster variables}), which are grouped into overlapping {\em clusters}. Roughly speaking, a cluster algebra is a commutative algebra with an extra combinatorial structure.
Assem,  Schiffler, and  Shramchenko conjectured that

\begin{Conjecture}\label{conjecture}
(\cite[Conjecture 1.2]{ASV})  The set of cluster variables uniquely determines  the
cluster algebra structure, i.e., any cluster algebra is unistructural (see Definition \ref{maindef} for details).
\end{Conjecture}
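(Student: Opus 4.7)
The plan is to give an intrinsic characterization of the clusters of $\mathcal{A}(\mathcal{S})$ that depends only on the set $\mathcal{X}$ of cluster variables viewed inside the ambient field $\mathcal{F}$, with no reference to seeds or exchange matrices. Once this is achieved, any two cluster algebra structures on the same $\mathcal{X}$ must produce the same family of clusters, and the exchange relations (hence the exchange matrices up to the usual equivalence) can be recovered from this family. The candidate characterization is: a subset $C\subset\mathcal{X}$ of cardinality $n$ is a cluster precisely when every element of $\mathcal{X}$ can be written as a Laurent polynomial in $C$. The ``only if'' half is the Laurent phenomenon of Fomin--Zelevinsky, so the substantive content lies in the converse direction.

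For the converse I would use the positivity theorem in the skew-symmetrizable case, which guarantees that every cluster variable has a positive Laurent expansion in every cluster, combined with the basic observation that $x\in\mathcal{X}$ belongs to a cluster $C$ iff its Laurent expansion in $C$ is the single monomial $x$. The argument then proceeds by induction on the size of the symmetric difference $|C\triangle C_{0}|$, where $C$ is an arbitrary subset of $\mathcal{X}$ satisfying the Laurent condition and $C_{0}$ is a fixed honest cluster of $\mathcal{A}(\mathcal{S})$. In the inductive step one picks $x\in C\setminus C_{0}$, analyses its positive Laurent expansion in $C_{0}$, and locates an $x_{0}\in C_{0}\setminus C$ such that $(C_{0}\setminus\{x_{0}\})\cup\{x\}$ is an honest cluster obtained from $C_{0}$ by mutation in direction $x_{0}$; the resulting cluster has smaller symmetric difference with $C$, and the induction closes.

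The main obstacle is this last step: verifying that the ``replacement'' of $x_{0}$ by $x$ inside $C_{0}$ really corresponds to an actual mutation, and is not a spurious coincidence of Laurent data. I expect to need the uniqueness of denominator vectors of non-initial cluster variables (a known property in the skew-symmetrizable setting) together with the connectedness of the exchange graph, both in order to single out the correct $x_{0}$ and to identify $x$ as the variable introduced by the mutation at $x_{0}$. Once the intrinsic characterization of clusters is in place, any second cluster algebra structure $\mathcal{S}'$ with the same set of cluster variables automatically shares the same collection of clusters, and unistructurality — hence Conjecture \ref{conjecture} — follows immediately.
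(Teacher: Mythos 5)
Your overall strategy---find an intrinsic, structure-free characterization of clusters inside $\mathcal{X}$ and deduce unistructurality from it---is the right shape of argument, and it is broadly how the paper proceeds; but the characterization you choose and, more importantly, the induction you propose to prove it, leave the essential difficulty unresolved. The inductive step \emph{is} the theorem: you need that for any candidate set $C$ with the Laurent property and any honest cluster $C_0\neq C$, there exist $x\in C\setminus C_0$ and $x_0\in C_0\setminus C$ such that mutation of $C_0$ at $x_0$ produces exactly $(C_0\setminus\{x_0\})\cup\{x\}$. Even in the special case where $C$ is itself an honest cluster this is a strong and unestablished assertion about the exchange graph (that the symmetric difference with a fixed target cluster can always be strictly decreased by a single mutation). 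Note also that not every choice of $x\in C\setminus C_0$ can work: in type $A_2$, with the standard labelling $x_1,\dots,x_5$ of the five cluster variables, take $C=\{x_3,x_4\}$ and $C_0=\{x_5,x_1\}$; then $x=x_3$ admits no valid $x_0$, since neither $\{x_1,x_3\}$ nor $\{x_5,x_3\}$ is a cluster, while $x=x_4$ does. So you must also explain how to select $x$, and neither uniqueness of denominator vectors nor connectedness of the exchange graph gives you this; when $C$ is merely a candidate set, positivity of the expansion of $x$ in $C_0$ tells you nothing about which single mutation of $C_0$ to perform, since $x$ may be arbitrarily far from $C_0$ in the exchange graph.

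The paper sidesteps this by reducing to a \emph{pairwise} criterion: Theorem \ref{thmcomp} (from \cite{CL1}) says a subset of $\mathcal{X}(\mathcal{S})$ is a cluster if and only if it is a maximal $d$-compatible set, so it suffices to show that $d$-compatibility of two cluster variables is detectable from Laurent-expansion data alone. That detection is the content of Proposition \ref{mainpro} (for non-compatible $x,z$ there is a cluster containing $x$ in which some monomial of the expansion of $z$ has strictly negative exponent on $x$ and nonnegative exponents elsewhere, proved via the enough $g$-pairs property), combined with a specialization argument (send all coefficients to $1$, evaluate at $x=1/2$ and the other variables at $1$) to contradict positivity. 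If you want to salvage your route you need an input of comparable strength to Theorem \ref{thmcomp}; the tools you name do not supply it. Two further points to repair: the cardinality $n$ of your candidate sets is itself a structure-dependent quantity (the paper deliberately allows the two structures to have different ranks $n$ and $m$), and ``Laurent polynomial'' must specify the coefficient ring $\mathbb{ZP}$, which also depends on the structure; the paper's coefficient-specialization homomorphism $\varphi$ is precisely what neutralizes this last issue.
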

This is a very interesting conjecture in the following sense. We know that a cluster algebra $\mathcal A(\mathcal S)$ is a  commutative algebra with an extra combinatorial structure. As an algebra, $\mathcal A(\mathcal S)$ is generated by the set of cluster variables. So the algebraic structure of $\mathcal A(\mathcal S)$ is uniquely determined by the set of cluster variables. The above conjecture predicts that the combinatorial structure of $\mathcal A(\mathcal S)$ can be also uniquely determined by the set of cluster variables.

 Conjecture \ref{conjecture} has been affirmed for cluster algebras of Dynkin type or rank $2$  in \cite{ASV}, for cluster algebras of type $\tilde A$  in \cite{B}.   Recently, Bazier-Matte and Plamondon have affirmed Conjecture \ref{conjecture} for cluster algebras from surfaces without punctures in \cite{BP}.  Note that all cluster algebras considered above are {\em with trivial coefficients}.

In this paper,  we will affirm  Conjecture \ref{conjecture} for  any skew-symmetrizable cluster algebras {\em with general coefficients}. The following is our main result:

\begin{Theorem}\label{mainthm}
Any skew-symmetrizable cluster algebra is unistructural.
\end{Theorem}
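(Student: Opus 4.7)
The plan is to extract from the foundational properties of cluster algebras an intrinsic characterization of clusters --- one phrased entirely in terms of the ambient field $\mathcal{F}$ and the set $\chi(\mathcal{A})$ of cluster variables, with no reference to the choice of cluster algebra structure $\mathcal{S}$. Once such a characterization is available, unistructurality is immediate: if $\mathcal{A}(\mathcal{S})$ and $\mathcal{A}(\mathcal{S}')$ share the ambient field and the set of cluster variables, the characterization yields the same collection of clusters for both structures. By symmetry it will suffice to show that every cluster $\mathbf{x}=\{x_{1},\dots,x_{n}\}$ of $\mathcal{S}$ is also a cluster of $\mathcal{S}'$.

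The candidate intrinsic characterization I would propose is the following: a subset $\mathbf{x}\subseteq\chi(\mathcal{A})$ of cardinality $n$ is a cluster of $\mathcal{A}(\mathcal{S})$ if and only if $\mathbf{x}$ is algebraically independent over the coefficient ring and every element of $\chi(\mathcal{A})$ lies in the Laurent polynomial ring $\mathbb{Z}\mathbb{P}[\mathbf{x}^{\pm 1}]$. Necessity of both conditions is a standard consequence of algebraic independence of clusters and the Laurent phenomenon, and --- crucially --- neither condition refers to the exchange data of $\mathcal{S}$. Once sufficiency is in hand, applying the characterization to $\chi(\mathcal{A}(\mathcal{S}))=\chi(\mathcal{A}(\mathcal{S}'))$ inside the common ambient field settles Theorem \ref{mainthm}.

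The main obstacle is precisely this sufficiency: one must rule out spurious $n$-subsets of $\chi(\mathcal{A})$ that are algebraically independent and Laurent-cover $\chi(\mathcal{A})$ without actually being clusters. To handle it I would fix any cluster $\mathbf{y}=\{y_{1},\dots,y_{n}\}$ of $\mathcal{S}$ and compare it to a candidate $\mathbf{x}$ through the two families of denominator vectors $\mathbf{d}(y_{j};\mathbf{x})$ and $\mathbf{d}(x_{i};\mathbf{y})$, both of which exist thanks respectively to the Laurent covering hypothesis on $\mathbf{x}$ and to the Laurent phenomenon for $\mathcal{S}$. Using the known structural properties of denominator vectors in the skew-symmetrizable setting --- the identity $\mathbf{d}(y_{j};\mathbf{y})=-\mathbf{e}_{j}$, sign coherence of $\mathbf{g}$-vectors, and the controlled behaviour of denominator vectors under a single mutation --- I would argue that the joint compatibility of these two families of vectors forces $\mathbf{x}$ to coincide with some iterated mutation of $\mathbf{y}$, hence to be a cluster.

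The hardest point is expected to be this last combinatorial step, because it must proceed uniformly in the ambient skew-symmetrizable exchange matrix and in arbitrary coefficients, going beyond the Dynkin, rank-$2$, $\tilde A$ and unpunctured-surface cases previously handled. Everything else in the argument --- the reduction by symmetry, the verification that the two proposed conditions are intrinsic, and the deduction of unistructurality once the characterization is established --- is formal once the sufficiency lemma is proved.
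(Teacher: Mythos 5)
Your reduction of unistructurality to an intrinsic characterization of clusters is the right general shape, but the proposal has a genuine gap: the sufficiency of your proposed characterization (an algebraically independent $n$-subset of $\chi(\mathcal A)$ over which all cluster variables are Laurent must be a cluster) is essentially the entire content of the theorem, and your sketch does not establish it. The step ``the joint compatibility of the two families of denominator vectors forces $\mathbf{x}$ to coincide with some iterated mutation of $\mathbf{y}$'' names no mechanism: sign coherence of $g$-vectors and the one-step mutation rule for $d$-vectors give you information about genuine clusters of $\mathcal S$, but your candidate set $\mathbf{x}$ is a priori \emph{not} known to be a cluster of $\mathcal S$, so none of those structural results apply to it. There is no known induction on mutation distance that starts from ``Laurent-covering plus algebraic independence'' and lands on ``reachable by mutation,'' and inventing one would be at least as hard as the conjecture itself. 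As written, the proof has a placeholder exactly where the difficulty lives.

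The paper takes a different and more tractable route: instead of characterizing clusters globally, it characterizes \emph{pairs} of cluster variables lying in a common cluster. Using the enough $g$-pairs property (Theorem \ref{thmenough}), it shows (Proposition \ref{mainpro}) that for any two cluster variables $x, z$ not sharing a cluster, there is a cluster containing $x$ in whose Laurent expansion of $z$ some monomial has strictly negative exponent on $x$ and nonnegative exponents elsewhere; positivity of Laurent expansions then yields a numerical contradiction (evaluating at $x=1/2$, all other variables $=1$) if $x$ and $z$ \emph{did} share a cluster in the other structure $\mathcal S'$. This makes pairwise compatibility intrinsic to the set of cluster variables. The passage from pairwise data back to clusters is then handled by the nontrivial external input Theorem \ref{thmcomp} from \cite{CL1}: maximal $d$-compatible sets are exactly clusters. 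That theorem is precisely the ``sufficiency'' ingredient your proposal is missing, and the pairwise reformulation is what makes it available. If you want to salvage your approach, you should replace your global Laurent-covering condition by the pairwise compatibility condition and invoke (or reprove) the equivalence of $d$-compatibility and $c$-compatibility.
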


This paper is organized as follows. In Section 2, some basic definitions, notations and known results are introduced. In Section 3, the proof of Theorem \ref{mainthm} is given.

\section{Preliminaries}

\subsection{Cluster algebras}
Recall that $(\mathbb P, \oplus, \cdot)$ is a {\bf semifield} if $(\mathbb P,  \cdot)$ is an abelian multiplicative group endowed with a binary operation of auxiliary addition $\oplus$ which is commutative, associative, and distributive with respect to the multiplication $\cdot$ in $\mathbb P$.

Let $Trop(y_1,\cdots,y_m)$ be a free abelian group generated by $\{y_1,\cdots,y_m\}$. We define the addition $\oplus$ in $Trop(y_1,\cdots,y_m)$ by $\prod\limits_{i}y_i^{a_i}\oplus\prod\limits_{i}y_i^{b_i}=\prod\limits_{i}y_i^{min(a_i,b_i)}$, then $(Trop(y_1,\cdots,y_m), \oplus)$ is a semifield, which is called a {\bf tropical semifield}.

The multiplicative group of any semifield $\mathbb P$ is torsion-free \cite{FZ}, hence its group ring $\mathbb Z\mathbb P$ is a domain.
We take an ambient field $\mathcal F$  to be the field of rational functions in $n$ independent variables with coefficients in $\mathbb Z\mathbb P$.

An integer matrix $B_{n\times n}=(b_{ij})$ is  called  {\bf skew-symmetrizable} if there is a positive integer diagonal matrix $S$ such that $SB$ is skew-symmetric, where $S$ is said to be a {\bf skew-symmetrizer} of $B$.

\begin{Definition}
A {\bf  seed} in $\mathcal F$ is a triplet $({\bf x},{\bf y},B)$ such that

(i)  ${\bf x}=\{x_1,\dots, x_n\}$ is a free generating set of $\mathcal F$ over $\mathbb{ZP}$. We call  ${\bf x}$   the {\bf cluster} and $x_1,\dots,x_n$ the {\bf cluster variables}  of  $({\bf x},{\bf y},B)$.

(ii) ${\bf y}=\{y_1,\cdots,y_n\}$ is a subset of  $\mathbb P$. We call $y_1,\cdots,y_n$ the {\bf coefficients} of  $({\bf x},{\bf y},B)$.

(iii) $B=(b_{ij})$ is a skew-symmetrizable matrix, called an {\bf exchange matrix}.
\end{Definition}

Let $({\bf x},{\bf y}, B)$ be a  seed in $\mathcal F$, one can associate {\em binomials} $F_1,\cdots,F_n$ defined by
\begin{eqnarray}
F_k=\frac{y_k}{1\oplus y_k}\prod\limits_{b_{ik}>0}x_i^{b_{ik}}+\frac{y_k}{1\oplus y_k}\prod\limits_{b_{ik}<0}x_i^{-b_{ik}}.\nonumber
\end{eqnarray}
$F_1,\cdots,F_n$ are called the {\bf exchange binomials} of $({\bf x},{\bf y}, B)$.

\begin{Definition}\label{defmutation}
Let $({\bf x},{\bf y},B)$ be a  seed in $\mathcal F$, and $F_1,\cdots, F_n$ be the exchange polynomials of  $({\bf x},{\bf y},B)$. Define the {\bf mutation}  of  $({\bf x},{\bf y},B)$ in the direction $k\in\{1,\cdots,n\}$ as a new triple $\mu_k({\bf x},{\bf y},B)=( {\bf x}^{\prime},  {\bf y}^{\prime},  B^{\prime})$ in $\mathcal F$
given by
\begin{eqnarray}
b_{ij}^{\prime}&=&\begin{cases}-b_{ij}~,& i=k\text{ or } j=k;\\ b_{ij}+sgn(b_{ik})max(b_{ik}b_{kj},0)~,&otherwise.\end{cases}\nonumber\\
x_i^{\prime}&=&\begin{cases}x_i~,&\text{if } i\neq k\\ F_k/x_k,~& \text{if }i=k.\end{cases}\text{ and }
 y_i^{\prime}=\begin{cases} y_k^{-1}~,& i=k\\ y_iy_k^{max(b_{ki},0)}(1\bigoplus y_k)^{-b_{ki}}~,&otherwise.
\end{cases}.\nonumber
\end{eqnarray}
\end{Definition}

It can be seen that $\mu_k({\bf x},{\bf y},B)$ is also a  seed and $\mu_k(\mu_k({\bf x},{\bf y},B))=({\bf x},{\bf y},B)$.

\begin{Definition}\label{defcpattern}
  A {\bf cluster pattern} $\mathcal S$ is an assignment of a  seed  $({\bf x}_t,{\bf y}_t,B_t)$ to every vertex $t$ of the $n$-regular tree $\mathbb T_n$, such that $({\bf x}_{t^{\prime}},{\bf y}_{t^{\prime}},B_{t^{\prime}})=\mu_k({\bf x}_t,{\bf y}_t,B_t)$ for any edge $t^{~\underline{\quad k \quad}}~ t^{\prime}$.
\end{Definition}
In the sequel, we always denote by ${\bf x}_t=\{x_{1;t},\cdots, x_{n;t}\},~ {\bf y}_t=\{y_{1;t},\cdots, y_{n;t}\}$ and $B_t=(b_{ij}^t).$
 \begin{itemize}
 \item Let  $\mathcal S$ be a cluster pattern,   the {\bf cluster algebra} $\mathcal A(\mathcal S)$  associated with $\mathcal S$ is the $\mathbb {ZP}$-subalgebra of the field $\mathcal F$ generated by all cluster variables of  $\mathcal S$.
 \item If  $\mathcal S$ is cluster pattern with the coefficients in $Trop(y_1,\cdots,y_m)$, the corresponding cluster algebra $\mathcal A(\mathcal S)$ is said to be a {\bf cluster algebra of  geometric type}.
 \item If  $\mathcal S$ is cluster pattern  with the coefficients in $Trop(y_1,\cdots,y_n)$ and there exists a seed $({\bf x}_{t_0},{\bf y}_{t_0},B_{t_0})$ such that $y_{i;t_0}=y_i$ for $i=1,\cdots,n$, then  the corresponding cluster algebra  $\mathcal A(\mathcal S)$ is called a {\bf cluster algebra with principal coefficients at $t_0$}.
\end{itemize}

\begin{Theorem} \label{thmLP} Let $\mathcal A(\mathcal S)$ be a skew-symmetrizable cluster algebra  with a seed $({\bf x}_{t_0},{\bf y}_{t_0},B_{t_0})$.

(i) ( \cite[Theorem 3.1]{FZ}, Laurent phenomenon)
 Any cluster variable $x_{i;t}$ of $\mathcal A(\mathcal S)$ is a $\mathbb {ZP}$-linear combination of  Laurent monomials in ${\bf x}_{t_0}$.

(ii) (\cite{GHKK}, Positive  Laurent phenomenon)  Any cluster variable $x_{i;t}$ of $\mathcal A(\mathcal S)$ is a $\mathbb {NP}$-linear combination of Laurent monomials in ${\bf x}_{t_0}$.
\end{Theorem}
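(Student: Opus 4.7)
My plan is to handle the two parts separately, since they require vastly different depth. For part (i), I will follow the original Fomin--Zelevinsky strategy: by induction on the distance from $t_0$ in $\mathbb T_n$, show that every cluster variable lies in the Laurent polynomial ring $R_{t_0}:=\mathbb{ZP}[x_{1;t_0}^{\pm 1},\dots,x_{n;t_0}^{\pm 1}]$. For part (ii), substantially heavier machinery is needed, and I will sketch the scattering-diagram/theta-function approach of Gross--Hacking--Keel--Kontsevich.

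For part (i), the base case is immediate, and the one-step case is just the exchange relation $x_{k;t'}=F_k/x_{k;t_0}\in R_{t_0}$. The technical heart is the \emph{Caterpillar Lemma}: given a ``caterpillar'' subtree of $\mathbb T_n$ (a path with short side branches), a rational function that is simultaneously Laurent in a few consecutive clusters along the spine must be Laurent in every cluster of the caterpillar. The proof of this lemma rests on an explicit coprimality argument: the two monomials of $F_k$ are coprime in $R_{t_0}$, and this coprimality is preserved under mutations in directions $j\neq k$. By chaining caterpillars from $t_0$ out to an arbitrary vertex $t$, one forces each $x_{i;t}$ into $R_{t_0}$. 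The main technical hurdle here is setting up the coprimality bookkeeping in the skew-symmetrizable (rather than just skew-symmetric) setting, which introduces diagonal scaling factors from a skew-symmetrizer that must be tracked through the mutation computations.

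For part (ii), the plan proceeds in four stages. First, using the separation-of-additions formula for cluster variables, reduce positivity over an arbitrary semifield $\mathbb P$ to the case of principal coefficients at $t_0$. Second, construct the consistent cluster scattering diagram $\mathfrak D_{B_{t_0}}$ in $M_{\mathbb R}$ with prescribed initial walls $(e_i^\perp,\,1+\hat y_i)$; a Kontsevich--Soibelman-type consistency theorem yields a unique completion whose wall-crossing automorphisms are exponentials of positive power series. Third, for each $m\in M$ and a generic basepoint $Q$, define the theta function $\vartheta_{Q,m}$ as the weighted sum over broken lines in $\mathfrak D_{B_{t_0}}$ ending at $Q$ with asymptotic slope $m$; each broken line contributes a monomial with non-negative integer coefficient, so $\vartheta_{Q,m}$ is a positive Laurent polynomial whenever the sum is finite. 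Finally, identify each cluster variable $x_{i;t}$ with $\vartheta_{Q_0,g_{i;t}}$, where $g_{i;t}$ is its $g$-vector, by matching the cluster mutation rule against wall-crossings at the initial walls of $\mathfrak D_{B_{t_0}}$. Positivity of $x_{i;t}$ in ${\bf x}_{t_0}$ then follows immediately from the broken-line construction.

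The main obstacle is unmistakably in part (ii): proving consistency of the cluster scattering diagram and establishing the identification of cluster variables with theta functions are both deep, and in the skew-symmetrizable case one additionally has to propagate the skew-symmetrizer throughout the definitions of walls, broken lines, and wall-crossing automorphisms. Part (i) is by comparison elementary once the induction is set up, but even there the combinatorial bookkeeping required to maintain coprimality along every caterpillar is what makes or breaks the argument.
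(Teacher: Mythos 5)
The paper does not prove this theorem at all: it is imported as a black box, with part (i) cited to Fomin--Zelevinsky (Cluster algebras I, Theorem 3.1) and part (ii) to Gross--Hacking--Keel--Kontsevich, and nothing in the paper depends on the internal details of either proof. Your outline is therefore not comparable to anything in the paper itself, but it does accurately reproduce the strategies of the cited references: the Caterpillar Lemma with its coprimality bookkeeping for the ordinary Laurent phenomenon, and the scattering-diagram/broken-line/theta-function machinery (with reduction to principal coefficients by separation of additions) for positivity. Two caveats on calibration. First, what you have written is a plan rather than a proof: for part (i) the actual content lives in verifying the coprimality conditions after one and two mutations (the analogues of conditions (3.1)--(3.3) in the Fomin--Zelevinsky argument), and your sketch does not carry that out; for part (ii) the consistency of the cluster scattering diagram and the identification of cluster variables with theta functions are each major theorems whose proofs you are citing rather than giving. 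Second, your statement of the Caterpillar Lemma is slightly loose --- the hypothesis is Laurentness at the head of the caterpillar and at the vertices adjacent to the first spine vertex, not at ``a few consecutive clusters along the spine'' --- though this does not affect the overall correctness of the outline. Given that the paper itself treats the theorem as known, citing the two references as you implicitly do is the appropriate level of rigor here.
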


Recall that the {\bf exchange graph} ${\bf EG}(\mathcal A(\mathcal S))$ of a cluster algebra $\mathcal A(\mathcal S)$ is a graph satisfying that
\begin{itemize}
\item the set of vertices of ${\bf EG}(\mathcal A(\mathcal S))$ is in bijection with the set of clusters of $\mathcal A(\mathcal S)$;
    \item two vertices joined by an edge if and only if the corresponding two clusters differ by a single cluster variable.
\end{itemize}
For a cluster algebra  $\mathcal A(\mathcal S)$, we denote by $\mathcal X(\mathcal S)$ the set of cluster variables of  $\mathcal A(\mathcal S)$.

\begin{Definition}\label{maindef} \cite{ASV}
A cluster algebra  $\mathcal A(\mathcal S)$ is {\bf unistructural} if for any cluster algebra $\mathcal A(\mathcal S^\prime)$, $\mathcal X(\mathcal S)=\mathcal X(\mathcal S^\prime)$ implies that the two cluster algebras have the same set of clusters and ${\bf EG}(\mathcal A(\mathcal S))={\bf EG}(\mathcal A(\mathcal S^\prime))$.
\end{Definition}

\subsection{The enough $g$-pairs property}

Let $\mathcal A(\mathcal S)$ be a cluster algebra with principal coefficients at $t_0$, one can give a $\mathbb Z^n$-grading of $\mathbb Z[x_{1;t_0}^{\pm1},\cdots,x_{n;t_0}^{\pm1},y_1,\cdots,y_n]$ as follows:
$$deg(x_{i;t_0})={\bf e}_i,~deg(y_j)=-{\bf b}_j,$$
where ${\bf e}_i$ is the $i$-th column vector of $I_n$, and ${\bf b}_j$ is the $j$-th column vector of $B_{t_0}$, $i,j=1,2,\cdots,n$. As shown in \cite{FZ3} every cluster variable $x_{i;t}$ of $\mathcal A(\mathcal S)$ is homogeneous with respect to this $\mathbb Z^n$-grading. The {\bf $g$-vector} $g(x_{i;t})$ of a cluster variable $x_{i;t}$ is defined to be its degree with respect to the $\mathbb Z^n$-grading and we write $g(x_{i;t})=(g_{1i}^t,~g_{2i}^t,~\cdots,~g_{ni}^t)^{\top}\in\mathbb Z^n$. Let ${\bf x}_t$ be a cluster of $\mathcal A(\mathcal S)$,  the matrix $G_t=(g(x_{1;t}),\cdots,g(x_{n;t}))$ is called the {\bf $G$-matrix} of ${\bf x}_t$.

Denote by ${\bf x}_t^{\bf a}:=\prod\limits_{i=1}^nx_{i;t}^{a_i}$ for ${\bf a}\in \mathbb Z^n$, which is a Laurent monomial in ${\bf x}_t$. If ${\bf a}\in\mathbb N^n$, then ${\bf x}_t^{\bf a}$ is called a {\bf cluster monomial} in ${\bf x}_t$.

Clearly, any Laurent monomial ${\bf x}_t^{\bf a}$ is also homogeneous with respect to the $\mathbb Z^n$-grading. The degree of  ${\bf x}_t^{\bf a}$ is $G_t{\bf a}=:g({\bf x}_t^{\bf a})$, which is called the {\em $g$-vector of ${\bf x}_t^{\bf a}$}.

\begin{Theorem}\label{thmmonomial} \cite{GHKK,CL1}
Different cluster monomials have different $g$-vectors.
\end{Theorem}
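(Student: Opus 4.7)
The plan is to work in the principal-coefficients setting (where the $\mathbb Z^n$-grading and hence the $g$-vector are defined) and to combine two ingredients: the Fomin--Zelevinsky separation formula and the fan structure of the $g$-vector cones. First, recall that in a cluster algebra with principal coefficients at $t_0$, every cluster variable $x_{i;t}$ admits an expression $x_{i;t} = F_{i;t}(\hat y_1,\ldots,\hat y_n)\,x_{t_0}^{g(x_{i;t})}$, where $\hat y_j = y_j\prod_i x_{i;t_0}^{b_{ij}^{t_0}}$ and $F_{i;t}$ is a polynomial with constant term $1$. Taking products, any cluster monomial $M = {\bf x}_t^{\bf a}$ has the shape $F_M(\hat y)\,x_{t_0}^{G_t{\bf a}}$ with $F_M(0)=1$, so $g(M)=G_t{\bf a}$ and $M$ is homogeneous of this degree.

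Next, suppose $M_1 = {\bf x}_t^{\bf a}$ and $M_2 = {\bf x}_{t'}^{{\bf a}'}$ are two cluster monomials sharing a $g$-vector $g$. Then $g$ lies in both cones $G_t\,\mathbb R_{\geq 0}^n$ and $G_{t'}\,\mathbb R_{\geq 0}^n$. The step I would invoke is that the cones $G_s\,\mathbb R_{\geq 0}^n$ (over all seeds $s$) assemble into a simplicial fan (the $g$-vector fan), so the intersection $G_t\,\mathbb R_{\geq 0}^n \cap G_{t'}\,\mathbb R_{\geq 0}^n$ is a common face corresponding to a collection of cluster variables that appear simultaneously in $t$ and $t'$. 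In particular there is a seed $t''$ whose cluster contains the supports of both ${\bf a}$ and ${\bf a}'$. Since $\det(G_{t''})=\pm 1$, a nonnegative integer vector is written uniquely as $G_{t''}{\bf a}''$, so $M_1$ and $M_2$ must both coincide with the same cluster monomial ${\bf x}_{t''}^{{\bf a}''}$.

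The main obstacle is establishing the $g$-vector fan structure beyond the skew-symmetric case. In the skew-symmetric setting it follows from the scattering-diagram and theta-basis machinery of Gross--Hacking--Keel--Kontsevich. In the skew-symmetrizable setting I would follow the route of Cao--Li \cite{CL1}: the ``enough $g$-pairs'' property that names this subsection is designed precisely to compare $g$-vectors across different seeds, producing the common-refinement seed $t''$ used above and reducing the question to the invertibility of a single $G$-matrix. Once either of these two inputs is available, the argument given in the previous paragraph closes the proof.
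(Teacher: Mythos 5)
First, a point of comparison: the paper does not prove this statement at all --- it is quoted as a known result from \cite{GHKK} and \cite{CL1}, so there is no in-paper proof to measure your argument against. Judged on its own terms, your first two paragraphs are a legitimate and standard derivation \emph{in the skew-symmetric case}: the separation formula gives a cluster monomial the shape $F_M(\hat y)\,{\bf x}_{t_0}^{G_t{\bf a}}$ with $F_M(0)=1$, and once one has the full $g$-vector fan package (the cones $G_s\mathbb R_{\geq 0}^n$ form a simplicial fan whose rays are in bijection with cluster variables), a shared $g$-vector forces both monomials into a common simplicial face, where uniqueness of the nonnegative ray-decomposition (or $\det G_{t''}=\pm 1$) finishes the job. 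That is essentially how the statement is extracted from the scattering-diagram results of \cite{GHKK}.

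The genuine gap is in your third paragraph, i.e., in the skew-symmetrizable branch. The enough $g$-pairs property of \cite{CL1} does not ``produce a common-refinement seed $t''$'' and does not yield the fan structure: it says only that for each subset $I$ and each cluster ${\bf x}_t$ there is a cluster ${\bf x}_{t'}$, connected to the \emph{initial} cluster by an $I$-sequence, such that the $\pi_I$-projections of $g$-vectors of cluster monomials in ${\bf x}_t$ are matched by cluster monomials in ${\bf x}_{t'}$ supported on $I$. There is no claim that ${\bf x}_{t'}$ contains the support of ${\bf a}$ or ${\bf a}'$, and no claim that two arbitrary cones meet in a common face. The actual route in \cite{CL1} is a direct induction on rank: given two cluster monomials with equal $g$-vectors, one projects along suitable $g$-pairs, uses the block-triangular form of $G_{t'}$ and $\det G_{t'}=\pm1$ to pin down exponents (exactly as in the proof of Lemma 3.1 of the present paper), and never constructs the fan. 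As written, your argument in the skew-symmetrizable case assumes an input (the simplicial fan property for general skew-symmetrizable exchange matrices) that neither of the cited sources supplies in the form you need, so the proof is incomplete there; either restrict to the skew-symmetric case or replace the fan step by the $g$-pair induction.
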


Let $I$ be a subset of $\{1,\cdots,n\}$. We say that $(k_1,\cdots,k_s)$ is an {\bf $I$-sequence}, if $k_j\in I$ for $j=1,\cdots,s$.

\begin{Definition}
Let $\mathcal A(\mathcal S)$ be a skew-symmetrizable cluster algebra of rank $n$ with initial seed at $t_0$, and  $I=\{i_1,\cdots,i_p\}$ be a subset of $\{1,2,\cdots,n\}$.

(i) We say that a seed $({\bf x}_t,{\bf y}_t, B_t)$  of $\mathcal A(\mathcal S)$ is {\bf connected with $({\bf x}_{t_0}, {\bf y}_{t_0}, B_{t_0})$ by an $I$-sequence}, if there exists an $I$-sequence $(k_1,\cdots, k_s)$ such that $$({\bf x}_t,{\bf y}_t, B_t)=\mu_{k_s}\cdots\mu_{k_2}\mu_{k_1}({\bf x}_{t_0}, {\bf y}_{t_0}, B_{t_0}).$$

(ii) We say that a cluster ${\bf x}_t$ of $\mathcal A(\mathcal S)$ is {\bf connected with ${\bf x}_{t_0}$ by an $I$-sequence}, if there exists a seed containing the cluster ${\bf x}_t$ such that this seed is connected with
a seed containing the cluster ${\bf x}_{t_0}$ by an $I$-sequence.
\end{Definition}
Clearly, if the cluster ${\bf x}_t$ is connected with ${\bf x}_{t_0}$ by an $I$-sequence, then $x_{i;t}=x_{i;t_0}$ for $i\notin I$.

 For $I=\{i_1,\cdots,i_p\}\subseteq\{1,\cdots,n\}$, we assume that $i_1<i_2<\cdots<i_p$. Let $\pi_I: \mathbb R^n\rightarrow \mathbb R^{|I|}=\mathbb R^{p}$ be the  canonical projection given by
$\pi_I({\bf m})=(m_{i_1},\cdots,m_{i_p})^{\top}$, for ${\bf m}=(m_1,\cdots,m_n)^{\top}\in\mathbb R^n$.

\begin{Definition}
Let $\mathcal A(\mathcal S)$ be a skew-symmetrizable cluster algebra of rank $n$ with principal coefficients at $t_0$, and  $I$ be a subset of $\{1,\cdots,n\}$.

(i)  For two clusters ${\bf x}_t,{\bf x}_{t^\prime}$ of $\mathcal A(\mathcal S)$, the pair $({\bf x}_t,{\bf x}_{t^\prime})$ is called a {\bf $g$-pair along $I$}, if it satisfies the following  conditions:
\begin{itemize}
\item ${\bf x}_{t^\prime}$ is connected with ${\bf x}_{t_0}$ by an $I$-sequence and

\item for any cluster monomial ${\bf x}_t^{\bf v}$ in ${\bf x}_t$, there exists a cluster monomial ${\bf x}_{t^{\prime}}^{{\bf v}^{\prime}}$ in ${\bf x}_{t^{\prime}}$ with $v_i^{\prime}=0$ for $i\notin I$
  such that
$$\pi_I(g({\bf x}_t^{\bf v}))=\pi_I(g({\bf x}_{t^{\prime}}^{{\bf v}^{\prime}})),$$
where $g({\bf x}_t^{\bf v})$ and $g({\bf x}_{t^{\prime}}^{{\bf v}^{\prime}})$ are $g$-vectors of the cluster monomials
${\bf x}_t^{\bf v}$ and ${\bf x}_{t^{\prime}}^{{\bf v}^{\prime}}$ respectively.
\end{itemize}

(ii) $\mathcal A(\mathcal S)$ is said to have  {\bf the enough $g$-pairs property} if for  any subset $I$ of $\{1,\cdots,n\}$ and any cluster ${\bf x}_t$ of $\mathcal A(\mathcal S)$, there exists a cluster ${\bf x}_{t^\prime}$ such that $({\bf x}_t,{\bf x}_{t^\prime})$ is  a $g$-pair along $I$.
\end{Definition}

\begin{Theorem}\cite{CL1}\label{thmenough}
Any skew-symmetrizable cluster algebra $\mathcal A(\mathcal S)$ with principal coefficients at $t_0$ has  the enough $g$-pairs property.
\end{Theorem}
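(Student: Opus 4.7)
The plan is to proceed by induction on the distance $d(t_0, t)$ between ${\bf x}_t$ and the initial cluster in the exchange graph ${\bf EG}(\mathcal A(\mathcal S))$. For the base case $t = t_0$, take $t' := t_0$: since $G_{t_0} = I_n$, any cluster monomial ${\bf x}_{t_0}^{\bf v}$ has $g$-vector ${\bf v}$, so $\pi_I(g({\bf x}_{t_0}^{\bf v})) = \pi_I({\bf v})$ equals the projected $g$-vector of $\prod_{i \in I} x_{i;t_0}^{v_i}$, a cluster monomial in ${\bf x}_{t_0}$ supported in $I$; the empty sequence is trivially an $I$-sequence, so $({\bf x}_{t_0}, {\bf x}_{t_0})$ is a $g$-pair along $I$.

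For the inductive step, write ${\bf x}_t = \mu_k({\bf x}_{t_1})$ with $d(t_0, t_1) = d(t_0, t) - 1$, and let ${\bf x}_{t_1'}$ be a $g$-pair partner of ${\bf x}_{t_1}$ along $I$ provided by the inductive hypothesis. The strategy is to construct ${\bf x}_{t'}$ from ${\bf x}_{t_1'}$ by possibly further $I$-mutations. If $k \in I$, the natural candidate is ${\bf x}_{t'} := \mu_k({\bf x}_{t_1'})$, which is still connected to ${\bf x}_{t_0}$ by an $I$-sequence (concatenate with the step $k$); one then checks the $g$-pair condition by combining the mutation formula for $g$-vectors --- applied to both the passage ${\bf x}_{t_1} \to {\bf x}_t$ and ${\bf x}_{t_1'} \to {\bf x}_{t'}$ --- with the inductive hypothesis, using Theorem \ref{thmmonomial} to identify the target cluster monomial in ${\bf x}_{t'}$ from its $g$-vector. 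If $k \notin I$, one instead takes ${\bf x}_{t'}$ to be either ${\bf x}_{t_1'}$ itself or a suitable $I$-mutation of it, and uses the mutation formula to express $g({\bf x}_t^{\bf v})$ in terms of $g$-vectors associated to cluster monomials in ${\bf x}_{t_1}$; since the coordinate $k$ is discarded by $\pi_I$, the inductive hypothesis can be invoked to match the projections.

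The main obstacle of the argument lies in the case $k \notin I$: even with the mutation formula at hand, one must show that the projected $g$-vector $\pi_I(g({\bf x}_t^{\bf v}))$ is realizable as $\pi_I(g({\bf x}_{t'}^{{\bf v}'}))$ for a single genuine cluster monomial ${\bf x}_{t'}^{{\bf v}'}$ with $v_i' = 0$ for $i \notin I$ --- not merely as a $\mathbb Z$-linear combination of projected $g$-vectors of such monomials. This forces a careful choice of the additional $I$-mutations defining ${\bf x}_{t'}$, guided by the signs in the $k$-th column of $B_{t_1}$, and Theorem \ref{thmmonomial} must be invoked repeatedly to keep track of the cluster monomials being built so that the inductive construction actually closes up.
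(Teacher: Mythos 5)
This theorem is not proved in the present paper: it is quoted from \cite{CL1}, so there is no internal proof to compare your argument against. Judged on its own terms, your induction on the distance from $t_0$ in the exchange graph has a genuine gap at the inductive step, in both cases $k\in I$ and $k\notin I$.

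The inductive hypothesis only gives information about $\pi_I(g({\bf x}_{t_1}^{\bf w}))$ for cluster monomials ${\bf x}_{t_1}^{\bf w}$ in the old cluster. But for a cluster monomial ${\bf x}_t^{\bf v}$ with $v_k>0$, the vector $g({\bf x}_t^{\bf v})=v_k\,g(x_{k;t})+\sum_{i\ne k}v_i\,g(x_{i;t_1})$ is never the $g$-vector of a cluster monomial in ${\bf x}_{t_1}$ (by Theorem \ref{thmmonomial} distinct cluster monomials have distinct $g$-vectors), and the mutation rule expresses $g(x_{k;t})$ in terms of the $g(x_{i;t_1})$ with a \emph{negative} coefficient on $g(x_{k;t_1})$ and with coefficients governed by the exchange matrix and tropical signs at $t_1$ --- data invisible to the inductive hypothesis. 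So the hypothesis simply cannot be applied to $\pi_I(g({\bf x}_t^{\bf v}))$. Even setting that aside, your candidate partner $\mu_k({\bf x}_{t_1'})$ arises from mutating a \emph{different} seed, with a different exchange matrix $B_{t_1'}$ and different tropical signs, so there is no reason the two mutation formulas should act compatibly on the $I$-projections, and the proposal offers no argument that they do. Finally, in the case $k\notin I$ you concede that the crux --- realizing $\pi_I(g({\bf x}_t^{\bf v}))$ by a \emph{single genuine} cluster monomial supported on $I$ rather than by a formal integer combination --- requires a ``careful choice'' that is never specified; that realizability is essentially the entire content of the theorem, so restating it as an obstacle is not a proof. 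The argument in \cite{CL1} proceeds along a different axis (reduction on the size of the complement $I^c$, exploiting the block-triangular form of $G$-matrices along $I$-sequences, the invertibility $\det G_{t'}=\pm1$, and the positive Laurent phenomenon), precisely because the naive induction along the exchange graph does not close up.
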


\subsection{Compatibility degree on the set of cluster variables}

  Let $\mathcal A(\mathcal S)$ be a skew-symmetrizable cluster algebra, and $({\bf x}_{t_0}, {\bf y}_{t_0}, B_{t_0})$ be a seed of  $\mathcal A(\mathcal S)$. By the Laurent phenomenon,  any cluster variable $x$ of $\mathcal A(\mathcal S)$ has the form of $x=\sum\limits_{{\bf v}\in V} c_{\bf v}{\bf x}_{t_0}^{\bf v}$, where $V$ is a finite subset of $\mathbb Z^n$, $0\neq c_{\bf v}\in \mathbb {ZP}$. Let $-d_{j}$ be the minimal exponent of $x_{j;t_0}$ appearing in the expansion $x=\sum\limits_{{\bf v}\in V} c_{\bf v}{\bf x}_{t_0}^{\bf v}$. Then $x$ has the form of
\begin{eqnarray}
\label{eqd}x=\frac{f(x_{1;t_0},\cdots,x_{n;t_0})}{x_{1;t_0}^{d_1}\cdots x_{n;t_0}^{d_n}},\nonumber
\end{eqnarray}
where $f\in\mathbb {ZP}[x_{1;t_0},\cdots,x_{n;t_0}]$ with  $x_{j;t_0}\nmid f$ for $j=1,\cdots,n$.
The vector

$$d^{t_0}(x) =(d_1^{t_0}(x),\cdots, d_n^{t_0}(x))^{\top}:= (d_1,\cdots, d_n)^{\top}$$ is called the {\bf denominator vector} (briefly, {\bf $d$-vector})  of the cluster variable $x$ with respect to ${\bf x}_{t_0}$.

Let $\mathcal A(\mathcal S)$ be a skew-symmetrizable cluster algebra, and $\mathcal X(\mathcal S)$ be the set of cluster variables of  $\mathcal A(\mathcal S)$. In \cite{CL1}, we proved that there exists a well-defined function $d:\mathcal X(\mathcal S)\times \mathcal X(\mathcal S)\rightarrow\mathbb Z_{\geq -1}$, which is called the {\bf compatibility degree}.  For any two cluster variables $x_{i;t}$ and $x_{j;t_0}$, the value of $d(x_{j;t_0},x_{i;t})$ is defined by the following steps:
\begin{itemize}
\item choose a cluster ${\bf x}_{t_0}$ containing the cluster variable $x_{j;t_0}$;
\item compute the $d$-vector of $x_{i;t}$ with respect to ${\bf x}_{t_0}$, say, $d^{t_0}(x_{i;t})=(d_1,\cdots,d_n)^{\top}$;
    \item $d(x_{j;t_0},x_{i;t}):=d_j$, which is called the {\bf compatibility degree of $x_{i;t}$ with respect to $x_{j;t_0}$}.
\end{itemize}
\begin{Remark}\label{rmkd}
\cite{CL1} The compatibility degree has the following properties:

(1)  The value $d(x_{j;t_0},x_{i;t})$ does not depend on the choice of ${\bf x}_{t_0}$, thus the compatibility degree function is well-defined.

(2) $d(x_{j;t_0},x_{i;t})=-1$ if and only if $d(x_{i;t},x_{j;t_0})=-1$, and if and only if $x_{j;t_0}=x_{i;t}$;

(3) $d(x_{j;t_0},x_{i;t})=0$ if and only if $d(x_{i;t},x_{j;t_0})=0$, and if and only if $x_{j;t_0}\neq x_{i;t}$ and there exists a cluster ${\bf x}_{t^\prime}$ containing both $x_{j;t_0}$ and $x_{i;t}$;

(4) By (2), (3) and $d(x_{j;t_0},x_{i;t})\geq-1$, we know that $d(x_{j;t_0},x_{i;t})\leq0$ if and only if $d(x_{i;t},x_{j;t_0})\leq0$, if and only if there exists a cluster ${\bf x}_{t^\prime}$ containing both $x_{j;t_0}$ and $x_{i;t}$.

(5) By (4), we know that $d(x_{j;t_0},x_{i;t})>0$ if and only if $d(x_{i;t},x_{j;t_0})>0$, if and only if there exists no cluster ${\bf x}_{t^\prime}$ containing both $x_{j;t_0}$ and $x_{i;t}$.
\end{Remark}

We say that $x_{i;t}$ and $x_{j;t_0}$ are {\bf $d$-compatible} if $d(x_{j;t_0},x_{i;t})\leq0$, i.e., if there exists a cluster ${\bf x}_{t^\prime}$ containing both $x_{j;t_0}$ and $x_{i;t}$. A subset $M$ of $\mathcal A(\mathcal S)$  is a  {\bf $d$-compatible set} if any two cluster variables in this set are $d$-compatible.

There is another type of compatible sets, which we call $c$-compatible sets. A subset $M$ of $\mathcal A(\mathcal S)$  is a  {\bf $c$-compatible set} if there exists a cluster ${\bf x}_{t^\prime}$ such that $M\subseteq {\bf x}_{t^\prime}$. Roughly speaking, $c$-compatibility is just compatibility with respect to clusters.

\begin{Theorem}\label{thmcomp}\cite{CL1}
Let $\mathcal A(\mathcal S)$ be a skew-symmetrizable cluster algebra, and $\mathcal X(\mathcal S)$ be the set of cluster variables of  $\mathcal A(\mathcal S)$. Then

(i) a subset $M$ of $\mathcal X(\mathcal S)$ is a $d$-compatible set if and only if it is a  $c$-compatible set, i.e., $M$ is a subset of some cluster of $\mathcal A(\mathcal S)$.

(ii) a subset $M$ of $\mathcal X(\mathcal S)$ is a maximal $d$-compatible set if and only it is a maximal $c$-compatible set, i.e., $M$ is a cluster of $\mathcal A(\mathcal S)$.
\end{Theorem}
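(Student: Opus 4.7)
The plan is to prove part (i) by induction on $|M|$; part (ii) is then immediate, since by (i) any maximal $d$-compatible set $M$ is contained in some cluster ${\bf x}_t$, and the cluster itself is $d$-compatible (by the easy direction of (i)), so maximality of $M$ forces $M = {\bf x}_t$.

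For part (i), the direction ``$c$-compatible $\Rightarrow$ $d$-compatible'' is immediate from Remark~\ref{rmkd}(4): if $M \subseteq {\bf x}_t$, any two elements of $M$ lie in the common cluster ${\bf x}_t$, so $d(z,z') \leq 0$. The base case $|M| \leq 2$ of the reverse implication is also Remark~\ref{rmkd}(4). For the inductive step, assume the conclusion for all $d$-compatible sets of size $<s$ and let $M = \{z_1,\ldots,z_s\}$ be $d$-compatible with $s \geq 3$. By the inductive hypothesis, $M\setminus\{z_s\}$ lies in some cluster; after relabeling, and passing to a principal-coefficient model of $\mathcal A(\mathcal S)$ at this seed (valid because $d$-compatibility and the cluster combinatorics are invariant under coefficient specialization), we take this cluster as ${\bf x}_{t_0}$ with $z_i = x_{i;t_0}$ for $i=1,\ldots,s-1$. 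Set $I := \{s,s+1,\ldots,n\}$.

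Choose any cluster ${\bf x}_{t_1}$ containing $z_s$. By Theorem~\ref{thmenough}, there is a cluster ${\bf x}_{t'}$ such that $({\bf x}_{t_1},{\bf x}_{t'})$ is a $g$-pair along $I$. Since ${\bf x}_{t'}$ is connected to ${\bf x}_{t_0}$ by an $I$-sequence, $x_{i;t'} = x_{i;t_0} = z_i$ for $i \notin I$, so $M\setminus\{z_s\} \subseteq {\bf x}_{t'}$, and it remains to prove $z_s \in {\bf x}_{t'}$. Applying the $g$-pair condition to the cluster monomial $z_s \in {\bf x}_{t_1}$ produces a cluster monomial ${\bf x}_{t'}^{{\bf v}'}$ with $v'_i = 0$ for $i \notin I$ and $\pi_I(g(z_s)) = \pi_I(g({\bf x}_{t'}^{{\bf v}'}))$. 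Since $x_{i;t'} = x_{i;t_0}$ for $i \notin I$, the non-$I$ columns of $G_{t'}$ are the standard basis vectors ${\bf e}_i$; therefore defining $\tilde{{\bf v}} \in \mathbb Z^n$ by $\tilde v_j = v'_j$ for $j \in I$ and $\tilde v_i = g_i(z_s) - g_i({\bf x}_{t'}^{{\bf v}'})$ for $i \notin I$ yields $G_{t'}\tilde{{\bf v}} = g(z_s)$.

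If the sub-claim $\tilde v_i \geq 0$ holds for every $i \notin I$, then ${\bf x}_{t'}^{\tilde{{\bf v}}}$ is a genuine cluster monomial with $g$-vector $g(z_s)$; Theorem~\ref{thmmonomial} then forces ${\bf x}_{t'}^{\tilde{{\bf v}}} = z_s$, and since the cluster variable $z_s$ is irreducible in $\mathcal A(\mathcal S)$, $\tilde{{\bf v}}$ must be a standard basis vector ${\bf e}_j$, giving $z_s = x_{j;t'} \in {\bf x}_{t'}$ and closing the induction. The main obstacle is therefore the sub-claim $g_i(z_s) \geq g_i({\bf x}_{t'}^{{\bf v}'})$ for $i \notin I$. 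I expect it to follow by combining the hypothesis $d^{t_0}_i(z_s) = 0$ for $i \notin I$ (Remark~\ref{rmkd}(3)--(4) applied to the pairwise $d$-compatibility of $z_s$ with each $x_{i;t_0}$) with the positive Laurent expansion of $z_s$ in ${\bf x}_{t_0}$ (Theorem~\ref{thmLP}(ii)) and the definition of the $\mathbb Z^n$-grading, using that $v'_i = 0$ for $i \notin I$ confines the non-$I$ contributions on the right side to sums over the $I$-columns of $G_{t'}$.
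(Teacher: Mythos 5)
The paper itself gives no proof of Theorem \ref{thmcomp}; it is quoted from \cite{CL1}. The closest in-paper analogue of your argument is the proof of Lemma \ref{lem1}, which runs exactly the same mechanism (enough $g$-pairs, the block structure of $G_{t'}$ coming from connectedness by an $I$-sequence, $\det G_{t'}=\pm1$, Theorem \ref{thmmonomial}, and irreducibility via \cite[Lemma 5.1]{CL1}) in the special case $|I|=n-1$. Your skeleton --- induction on $|M|$, part (ii) from part (i), reduction to principal coefficients at the cluster containing $M\setminus\{z_s\}$, a $g$-pair along $I=\{s,\dots,n\}$ starting from a cluster containing $z_s$, and the verification that $G_{t'}\tilde{\bf v}=g(z_s)$ --- is the right route and is correct as far as it goes.

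However, the sub-claim $\tilde v_i\ge 0$ for $i\notin I$ is a genuine gap, and it is the entire content of the hard direction; the sketch you give for it does not work as stated. The hypothesis $d(z_i,z_s)\le 0$ controls the minimal \emph{exponent} of $x_{i;t_0}$ over the monomials of the Laurent expansion, whereas $\tilde v_i=g_i(z_s)-g_i({\bf x}_{t'}^{{\bf v}'})$ is a statement about the $\mathbb Z^n$-\emph{degree}; since a monomial $c_{\bf v}{\bf y}^{\bf v}{\bf x}_{t_0}^{\bf u}$ has degree ${\bf u}-B_{t_0}{\bf v}$ rather than ${\bf u}$, positivity of the expansion in ${\bf x}_{t_0}$ does not convert exponent bounds into $g$-vector bounds. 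The way to close the gap --- and the way the proof of Lemma \ref{lem1} does it --- is the structural result quoted there from \cite{CL1}: with respect to the cluster ${\bf x}_{t'}$ one has $z_s={\bf x}_{t'}^{\bf r}\bigl(1+\sum_{{\bf v}\neq 0}c_{\bf v}{\bf y}^{\bf v}{\bf x}_{t'}^{\bf u}\bigr)$ with $c_{\bf v}\ge 0$ and $G_{t'}{\bf r}=g(z_s)$. Invertibility of $G_{t'}$ gives ${\bf r}=\tilde{\bf v}$; the monomial ${\bf x}_{t'}^{\bf r}$ genuinely occurs in the expansion (no cancellation, by positivity), so $r_i$ is bounded below by the minimal exponent $-d(x_{i;t'},z_s)$ of $x_{i;t'}$; and for $i\notin I$ one has $x_{i;t'}=x_{i;t_0}=z_i$ with $d(z_i,z_s)\le 0$ by the $d$-compatibility hypothesis and Remark \ref{rmkd}(1), whence $\tilde v_i=r_i\ge 0$. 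With that substitution (expansion with respect to ${\bf x}_{t'}$, not ${\bf x}_{t_0}$) your proof is complete.
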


\section{Proof of Theorem \ref{mainthm}}

\begin{Lemma}\label{lem1}
Let $\mathcal A(S)$ be a skew-symmetrizable cluster algebra with principal coefficients at $t_0$, then for  any  cluster variable $x_{i;t}$ and any initial cluster variable $x_{k;t_0}$, there exists a cluster ${\bf x}_{t^\prime}$ containing $x_{k;t_0}$ and a  Laurent monomial $F$ appearing in the Laurent expansion of $x_{i;t}$ with respect to ${\bf x}_{t^\prime}$ such that the following statements hold.

(i) The exponent of $x_{j;t^\prime}$ in $F$ are nonnegative for any $j\neq k$.

 (ii) (a) If the exponent of $x_{k;t^\prime}$ in $F$  is positive, then $x_{i;t}=x_{k;t^\prime}$;

 (b) If the exponent of $x_{k;t^\prime}$ in $F$ is zero, then $x_{i;t}\in {\bf x}_{t^\prime}$ and $x_{i;t}\neq x_{k;t^\prime}$;

(c) If $x_{i;t}\notin {\bf x}_{t^\prime}$, then the exponent of $x_{k;t^\prime}$  in $F$  is negative.
\end{Lemma}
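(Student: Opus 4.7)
The plan is to invoke the enough $g$-pairs property (Theorem~\ref{thmenough}) with the subset $I=\{1,\ldots,n\}\setminus\{k\}$ applied to the cluster ${\bf x}_t$ containing $x_{i;t}$. This supplies a cluster ${\bf x}_{t'}$ connected to ${\bf x}_{t_0}$ by an $I$-sequence; in particular the mutations avoid direction $k$, so $x_{k;t'}=x_{k;t_0}$ and ${\bf x}_{t'}$ contains $x_{k;t_0}$ as required. Applying the $g$-pair condition to the cluster monomial ${\bf x}_t^{{\bf e}_i}=x_{i;t}$ yields a cluster monomial ${\bf x}_{t'}^{{\bf v}'}=\prod_{j\neq k}x_{j;t'}^{v_j'}$ (with $v_k'=0$ and $v_j'\geq 0$) whose $g$-vector agrees with $g(x_{i;t})$ in every coordinate except possibly the $k$-th.

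Set $\lambda:=g_k(x_{i;t})-g_k({\bf x}_{t'}^{{\bf v}'})$ and define
$$F:=x_{k;t'}^{\lambda}\prod_{j\neq k}x_{j;t'}^{v_j'}.$$
Since $g(x_{k;t'})=g(x_{k;t_0})={\bf e}_k$, a short computation gives $g(F)=g({\bf x}_{t'}^{{\bf v}'})+\lambda{\bf e}_k=g(x_{i;t})$. Condition (i) is immediate by construction: the exponent of $x_{j;t'}$ in $F$ is $v_j'\geq 0$ for every $j\neq k$. The exponent of $x_{k;t'}$ in $F$ is $\lambda$, and the remaining statements are decided by the sign of $\lambda$.

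If $\lambda\geq 0$, then $F$ is itself a cluster monomial in ${\bf x}_{t'}$. Since $F$ and $x_{i;t}$ are cluster monomials sharing the same $g$-vector, Theorem~\ref{thmmonomial} forces $F=x_{i;t}$. Because $x_{i;t}$ is a single cluster variable and the elements of ${\bf x}_{t'}$ are algebraically independent, the monomial $F$ must be a single element of ${\bf x}_{t'}$, so $\lambda+\sum_{j\neq k}v_j'=1$. When $\lambda>0$, this forces $\lambda=1$, all $v_j'=0$, and $x_{i;t}=x_{k;t'}$, giving case (ii)(a). When $\lambda=0$, exactly one $v_l'$ equals $1$ (with $l\neq k$), so $x_{i;t}=x_{l;t'}\in{\bf x}_{t'}\setminus\{x_{k;t'}\}$, giving case (ii)(b). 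In both subcases, $F=x_{i;t}$ trivially appears in the Laurent expansion.

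The principal obstacle is the case $\lambda<0$, in which $F$ has a genuine negative exponent in $x_{k;t'}$ and is not a cluster monomial, so one must argue directly that $F$ actually occurs in the Laurent expansion of $x_{i;t}$ with respect to ${\bf x}_{t'}$. The natural tool is homogeneity in the $\mathbb{Z}^n$-grading: any Laurent monomial ${\bf x}_{t'}^{\bf a}$ in the expansion pairs with a coefficient whose degree is $g(x_{i;t})-G_{t'}{\bf a}$, and the exponent vector of $F$ is distinguished as the one making this coefficient's degree vanish. Combining positive Laurent phenomenon (Theorem~\ref{thmLP}(ii)) with the Fomin-Zelevinsky separation formula, one identifies the constant-$F$-polynomial contribution with the Laurent monomial ${\bf x}_{t'}^{g^{t'}(x_{i;t})}$ (which carries a nonzero coefficient), and tropical duality together with the $g$-pair matching above pins $g^{t'}(x_{i;t})$ equal to the exponent vector of $F$; this is the technical crux. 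Condition (ii)(c) then follows as the contrapositive of the $\lambda\geq 0$ analysis: if $x_{i;t}\notin{\bf x}_{t'}$ then cases (ii)(a) and (ii)(b) are both excluded, forcing $\lambda<0$ and hence the exponent of $x_{k;t'}$ in $F$ to be negative.
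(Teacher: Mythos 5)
Your overall strategy is the same as the paper's: invoke the enough $g$-pairs property (Theorem \ref{thmenough}) with $I=\{1,\dots,n\}\setminus\{k\}$, use the block form $G_{t^{\prime}}=\begin{pmatrix}G(t^{\prime})&0\\ \ast& 1\end{pmatrix}$ and $\det G_{t^\prime}=\pm1$ to see that the candidate exponent vector agrees with ${\bf v}^\prime$ off the $k$-th slot (your ${\bf v}^\prime+\lambda{\bf e}_k$ is exactly the paper's ${\bf r}$ with $G_{t^\prime}{\bf r}=g(x_{i;t})$), and settle the case $\lambda\ge 0$ via Theorem \ref{thmmonomial}. The place where your write-up falls short of a proof is precisely the point you flag yourself: when $\lambda<0$ you never actually establish that $F$ occurs with nonzero coefficient in the Laurent expansion of $x_{i;t}$ with respect to ${\bf x}_{t^\prime}$. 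The paper closes this by quoting from \cite{CL1} the expansion
$$x_{i;t}={\bf x}_{t^\prime}^{\bf r}\Bigl(1+\sum\limits_{0\neq {\bf v}\in\mathbb N^n,\,{\bf u}\in\mathbb Z^n}c_{\bf v}{\bf y}^{\bf v}{\bf x}_{t^\prime}^{\bf u}\Bigr),\qquad G_{t^\prime}{\bf r}=g(x_{i;t}),$$
which packages exactly the ingredients you gesture at (separation formula re-rooted at $t^\prime$, constant term $1$ of the $F$-polynomial via sign-coherence, and the chain rule $g^{t_0}=G_{t^\prime}g^{t^\prime}$) and moreover guarantees the coefficient of ${\bf x}_{t^\prime}^{\bf r}$ lies in $1+\mathbb{NP}$, hence is nonzero. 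Your sketch names the right tools but does not prove any of them, so as written this is a genuine gap rather than a complete argument; it is, however, fillable by the same citation the paper uses.

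A secondary, smaller issue: in the case $\lambda\ge 0$, after concluding $x_{i;t}={\bf x}_{t^\prime}^{{\bf v}^\prime+\lambda{\bf e}_k}$ you assert that algebraic independence of ${\bf x}_{t^\prime}$ forces the total degree to be $1$. Algebraic independence alone does not rule out a cluster variable being equal to, say, $x_{1;t^\prime}x_{2;t^\prime}$; one needs the statement that a cluster monomial equal to a cluster variable must itself be a single variable of that cluster (the paper cites \cite[Lemma 5.1]{CL1}; alternatively one can argue via positivity of the expansion of each $x_{j;t^\prime}$ in ${\bf x}_t$). With those two points supplied, your derivation of (ii)(a), (ii)(b), and of (ii)(c) as the contrapositive matches the paper.
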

\begin{proof}
This lemma  essentially due to Lemma \cite[Lemma 5.2]{CL1}, but with different presentation. For the convenience of the readers, we give a complete proof for this lemma.

(i). Without loss of generality, we can assume that $k=n$.
By Theorem \ref{thmenough}, $\mathcal A(S)$ has the enough $g$-pairs property, so for the cluster ${\bf x}_t$ and
$I=\{1,\cdots,n-1\}$, there exists a cluster ${\bf x}_{t^\prime}$ such that $({\bf x}_t,{\bf x}_{t^\prime})$ is a $g$-pair along $I$. Consider the Laurent expansion of $x_{i;t}$ with respect to ${\bf x}_{t^{\prime}}$, by \cite[Theorem]{CL1} this expansion has the form of

 $$x_{i;t}={\bf x}_{t^\prime}^{\bf r}(1+\sum\limits_{0\neq {\bf v}\in\mathbb N^n,~~{\bf u}\in\mathbb Z^n}c_{\bf v}{\bf y}^{\bf v}{\bf x}_{t^\prime}^{\bf u}),$$
where $c_{\bf v}\geq 0$ and ${\bf r}$ satisfies $g(x_{i;t})=G_{t^\prime}{\bf r}$.  Denote by $g(x_{i;t})=(g_1,\cdots,g_n)^\top$ and ${\bf r}=(r_1,\cdots,r_n)^\top$.

We will show that  $F={\bf x}_{t^\prime}^{\bf r}$ satisfies the requirements in Lemma \ref{lem1}.

Since $({\bf x}_t,{\bf x}_t^\prime)$ is a $g$-pair along $I=\{1,\cdots,n-1\}$, we know that for the cluster variable $x_{i;t}$ (as a cluster monomial in ${\bf x}_t$), there exists a cluster monomial ${\bf x}_{t^\prime}^{{\bf v}^\prime}$, with $v_j^\prime=0$ is zero for $j\notin I$, i.e., $v_n^\prime=0$ such that $$\pi_I(g(x_{i;t}))=\pi_I(g({\bf x}_{t^{\prime}}^{{\bf v}^\prime}))=\pi_I(G_{t^\prime}{\bf v}^\prime).$$

Because  $({\bf x}_t,{\bf x}_t^\prime)$ is a $g$-pair along $I=\{1,\cdots,n-1\}$, we know that ${\bf x}_{t^\prime}$ is connected with ${\bf x}_{t_0}$ by an $I=\{1,\cdots,n-1\}$-sequence, and the $G$-matrix of ${\bf x}_{t^\prime}$ has the form of $G_{t^{\prime}}=\begin{pmatrix}G(t^{\prime})&0\\ \ast& 1\end{pmatrix}$.
Thus  $\pi_I(g(x_{i;t}))=\pi_I(g({\bf x}_{t^{\prime}}^{{\bf v}^\prime}))=\pi_I(G_{t^\prime}{\bf v}^\prime)$ just means that
$$(g_{1},\cdots,g_{n-1})^{\top}=G(t^\prime)(v_{1}^\prime,\cdots,v_{n-1}^\prime)^{\top}.$$
By $g(x_{i;t})=(g_{1},\cdots,g_{n-1},g_{n})^{\top}=G_{t^\prime}{\bf r}$, we get that  $$(g_{1},\cdots,g_{n-1})^{\top}=G(t^\prime)(r_{1},\cdots,r_{n-1})^{\top}.$$
It is known  that $det(G(t^\prime))=det(G_{t^{\prime}})=\pm1$. Thus we get that $$(r_{1},\cdots,r_{n-1})^{\top}=(v_{1}^\prime,\cdots,v_{n-1}^\prime)^{\top}\in\mathbb N^{n-1}.$$

(ii). (a) and (b): If $r_{n}\geq 0$, then ${\bf r}\in \mathbb N^n$, and ${\bf x}_{t^\prime}^{{\bf r}}$ is a cluster monomial in ${\bf x}_{t^\prime}$ having the same $g$-vector  with the cluster variable $x_{i;t}$. By Theorem \ref{thmmonomial}, we get that $x_{i;t}={\bf x}_{t^\prime}^{{\bf r}}$. Then by \cite[Lemma 5.1]{CL1}, $x_{i;t}$ is a cluster variable in ${\bf x}_{t^\prime}$. More precisely, if $r_{n}>0$, then $x_{i;t}=x_{n;t^{\prime}}$. If $r_{n}=0$, then $x_{i;t}=x_{j;t^\prime}$ for some $j\neq n$.

 (c) follows from (a) and (b).
\end{proof}

The above lemma is about cluster algebras with principal coefficients, and we turn it into cluster algebras with general coefficients in the following proposition.
\begin{Proposition}\label{mainpro}
Let $\mathcal A(S)$ be a skew-symmetrizable cluster algebra, then for any two cluster variables $x_{i;t}$ and $x_{k;t_0}$,  there exists a cluster ${\bf x}_{t^\prime}$ containing $x_{k;t_0}$ and a  Laurent monomial $F$ appearing in the Laurent expansion of $x_{i;t}$ with respect to ${\bf x}_{t^\prime}$ such that the following statements hold.

(i) The exponent of $x_{j;t^\prime}$ in $F$ are nonnegative for any $j\neq k$.

 (ii) (a) If the exponent of $x_{k;t^\prime}$ in $F$  is positive, then $x_{i;t}=x_{k;t^\prime}$;

 (b) If the exponent of $x_{k;t^\prime}$ in $F$ is zero, then $x_{i;t}\in {\bf x}_{t^\prime}$ and $x_{i;t}\neq x_{k;t^\prime}$;

(c) If $x_{i;t}\notin {\bf x}_{t^\prime}$, then the exponent of $x_{k;t^\prime}$  in $F$  is negative.
\end{Proposition}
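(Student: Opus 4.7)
The plan is to reduce Proposition \ref{mainpro} to its principal-coefficient analogue (Lemma \ref{lem1}) via Fomin--Zelevinsky's separation formula and the positive Laurent phenomenon (Theorem \ref{thmLP}(ii)). The underlying combinatorial data (tree, cluster labels, exchange matrices at each seed, and identifications among cluster variables) is independent of the coefficient semifield, so once a suitable exponent vector ${\bf r}$ is extracted in the principal-coefficient setting, the task becomes verifying that ${\bf x}_{t'}^{\bf r}$ survives in the Laurent expansion of $x_{i;t}$ in $\mathcal{A}(\mathcal{S})$ with a positive coefficient in $\mathbb{NP}$.

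Let $\mathcal{A}^{\text{pr}}$ denote the cluster algebra with principal coefficients at $t_0$ sharing the initial exchange matrix $B_{t_0}$ with $\mathcal{A}(\mathcal{S})$. First I would apply Lemma \ref{lem1} to $\mathcal{A}^{\text{pr}}$, $x_{i;t}^{\text{pr}}$, and $x_{k;t_0}^{\text{pr}}$, producing a cluster ${\bf x}_{t'}^{\text{pr}}$ containing $x_{k;t_0}^{\text{pr}}$ (connected with ${\bf x}_{t_0}^{\text{pr}}$ by an $I$-sequence with $I=\{1,\ldots,n\}\setminus\{k\}$) and an exponent vector ${\bf r}\in\mathbb{Z}^n$ such that $({\bf x}_{t'}^{\text{pr}})^{\bf r}$ is a Laurent monomial (with coefficient $1$) in the expansion of $x_{i;t}^{\text{pr}}$ relative to ${\bf x}_{t'}^{\text{pr}}$, the components $r_j$ satisfy (i), and the dichotomy of (ii) holds in the principal case. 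Then I would take ${\bf x}_{t'}$ in $\mathcal{A}(\mathcal{S})$ to be the cluster reached from ${\bf x}_{t_0}$ by the same mutation sequence; since $k\notin I$, the cluster ${\bf x}_{t'}$ still contains $x_{k;t_0}$.

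To promote ${\bf x}_{t'}^{\bf r}$ to a Laurent monomial in the expansion of $x_{i;t}$ in $\mathcal{A}(\mathcal{S})$, I would invoke the standard $G$-matrix duality relating $g$-vectors of a cluster variable across principal-coefficient cluster algebras rooted at different seeds. This duality implies that ${\bf r}$ coincides with the $g$-vector of $x_{i;t}$ in the principal-at-$t'$ algebra $\mathcal{A}^{\text{pr}(t')}$, so the Laurent expansion of $x_{i;t}^{\text{pr}(t')}$ relative to the initial cluster ${\bf x}_{t'}^{\text{pr}(t')}$ takes the closed form $({\bf x}_{t'}^{\text{pr}(t')})^{\bf r}F_{i;t}^{(t')}(\hat{y}_{1;t'}',\ldots,\hat{y}_{n;t'}')$, where $F_{i;t}^{(t')}$ is the associated $F$-polynomial (with constant term $1$) and $\hat{y}_{j;t'}' = y_{j;t'}'\prod_i(x_{i;t'}^{\text{pr}(t')})^{b_{ij}^{t'}}$. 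Applying the separation formula with $t'$ as root,
$$x_{i;t}=\frac{{\bf x}_{t'}^{\bf r}\,F_{i;t}^{(t')}(\hat{y}_{1;t'},\ldots,\hat{y}_{n;t'})}{F_{i;t}^{(t')}|_{\mathbb{P}}(y_{1;t'},\ldots,y_{n;t'})},$$
where $\hat{y}_{j;t'}=y_{j;t'}\prod_i x_{i;t'}^{b_{ij}^{t'}}$. Collecting the coefficient of ${\bf x}_{t'}^{\bf r}$ in the resulting expansion gives a nonzero element of $\mathbb{NP}$: the $\mathbf{e}=0$ term contributes $F_{i;t}^{(t')}|_{\mathbb{P}}(y_{1;t'},\ldots,y_{n;t'})^{-1}\in\mathbb{P}$, and any additional contributions (from $\mathbf{e}\neq 0$ with $B_{t'}\mathbf{e}=0$) are non-negative by positive Laurent. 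Setting $F$ equal to this Laurent monomial with its coefficient yields the desired monomial.

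Conditions (i) and (ii) then transfer from Lemma \ref{lem1}: (i) holds because the exponents $r_j$ of $F$ are unchanged, while (ii) holds because cluster-variable equalities and memberships are determined by the $(g,F)$ data, which is combinatorially shared by $\mathcal{A}^{\text{pr}}$ and $\mathcal{A}(\mathcal{S})$. The main obstacle is the identification ${\bf r}=g_{t'}(x_{i;t})$, which matches the exponent vector produced by Lemma \ref{lem1} (expressed via $g$-vectors rooted at $t_0$) with the $g$-vector used by the separation formula (rooted at $t'$); once this change-of-root duality for $g$-vectors is in hand, the positivity argument is immediate.
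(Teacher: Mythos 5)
Your proposal is correct and follows exactly the paper's route: the paper's own proof is a two-line reduction to Lemma \ref{lem1} for the principal-coefficient algebra $\mathcal A(\mathcal S^{pr})$ followed by an appeal to the separation formula \cite[Theorem 3.7]{FZ3}. You have simply made explicit the details the paper leaves implicit (re-rooting the separation formula at $t'$, the $G$-matrix change-of-base identity ${\bf r}=g^{t'}(x_{i;t})$, and the positivity argument showing the monomial ${\bf x}_{t'}^{\bf r}$ survives specialization), all of which are sound.
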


\begin{proof}
Consider the cluster algebra $\mathcal A(S^{pr})$ with principal coefficients at $t_0$ and having the same exchange matrix with $\mathcal A(S)$ at $t_0$. Then Lemma \ref{lem1} holds for $\mathcal A(S^{pr})$, then the result follows from \cite[Theorem 3.7]{FZ3}.
\end{proof}

{\em The proof of Theorem \ref{mainthm}:}
Let  $\mathcal A(\mathcal S)$ and $\mathcal A(\mathcal S^\prime)$ be two skew-symmetrizable cluster algebras having the same set of cluster variables, i.e., $\mathcal X(\mathcal S)=\mathcal X(\mathcal S^\prime)$. We need to show that  $\mathcal A(\mathcal S)$ and $\mathcal A(\mathcal S^\prime)$ have the same set of clusters and ${\bf EG}(\mathcal A(\mathcal S))={\bf EG}(\mathcal A(\mathcal S^\prime))$.

We first show that  $x,z\in \mathcal X(\mathcal S)=\mathcal X(\mathcal S^\prime)$ are $d$-compatible in $\mathcal A(\mathcal S)$ if and only if  they are $d$-compatible  in $\mathcal A(\mathcal S^\prime)$.

Denote by ${\bf x}_{t}=\{x_{1;t},\cdots,x_{n;t}\}$ the cluster of $\mathcal A(\mathcal S)$ at the vertex $t\in\mathbb T_n$ and by ${\bf z}_{u}=\{z_{1;u},\cdots,z_{m;u}\}$ the cluster of $\mathcal A(\mathcal S^\prime)$ at the vertex $u\in\mathbb T_m$. (Note that there is no need for us to assume that $m=n$.)

Let $x,z\in  \mathcal X(\mathcal S)=\mathcal X(\mathcal S^\prime)$ be two cluster variables, which are $d$-compatible  in $\mathcal A(\mathcal S^\prime)$. Assume by contradiction that $x$ and $z$ are not $d$-compatible in  $\mathcal A(\mathcal S)$, i.e., there exists no cluster ${\bf x}_{t}$ of $\mathcal A(\mathcal S)$ containing both $x$ and $z$. For the cluster variables $z$ and $x$, applying Proposition \ref{mainpro} for $\mathcal A(\mathcal S)$, there exists a cluster ${\bf x}_{t^\prime}$ of $\mathcal A(\mathcal S)$ containing $x$ (say, $x=x_{1;t^\prime}$), such that  in the Laurent expansion of $z$ with respect to ${\bf x}_{t^\prime}$, there exists a  Laurent monomial $F$ satisfying that the exponent of $x_{j;t^\prime}$ in $F$ are nonnegative for any $j\neq 1$ and the exponent of $x=x_{1;t^\prime}$ in $F$ is negative (because there exists no cluster  of $\mathcal A(\mathcal S)$ containing both $x$ and $z$, in particular, $z\notin {\bf x}_{t^\prime}$). We can assume that  $F=cx_{1;t^\prime}^{-v_1}\prod\limits_{i=2}^nx_{i;t^\prime}^{v_i}$ with $v_1>0$,\;$v_2,\cdots,v_n\geq 0$ and $0\neq c\in\mathbb {NP}$ for some semifield $\mathbb P$. Thus the Laurent expansion of $z$ with respect to ${\bf x}_{t^\prime}$ can be written as

$$z=F+\tilde F(x_{1;t^\prime},\cdots,x_{n;t^\prime})=cx_{1;t^\prime}^{-v_1}\prod\limits_{i=2}^nx_{i;t^\prime}^{v_i}+\tilde F(x_{1;t^\prime},\cdots,x_{n;t^\prime}),$$
where $\tilde F$ is a Laurent polynomial with positive coefficients.

Since $x=x_{1;t^\prime}$ and $z$ are $d$-compatible  in $\mathcal A(\mathcal S^\prime)$, there exists a cluster ${\bf z}_u$ of $\mathcal A(\mathcal S^\prime)$ such that ${\bf z}_u$ containing both $x=x_{1;t^\prime}$ and $z$. Without loss of generality, we can assume that $x=x_{1;t^\prime}=z_{1;u}$ and $z=z_{2;u}$.
Consider the Laurent expansion of $x_{i;t^\prime}$ with respect to ${\bf z}_u$,

$$x_{i;t^\prime}=\frac{g_i(z_{1;u},\cdots,z_{m;u})}{z_{1;u}^{d_{1i}}\cdots z_{m;u}^{d_{ni}}},$$
where $g_i$ is a polynomial in $z_{1;u},\cdots,z_{m;u}$ with positive coefficients and $z_{l;u}\nmid g_i$ for any $l$.
By Remark \ref{rmkd}, and $x_{i;t^\prime}\neq x_{1;t^\prime}=x=z_{1;u}$ for any $i=2,\cdots,n$, we know that $d_{1i}\geq 0$ for $i=2,\cdots,n$. So for each $i=2,\cdots,n$, there exists a Laurent monomial $G_i$ appearing in the expansion of $x_{i;t^\prime}$ with respect to ${\bf z}_u$ such that the exponent of $z_{1;u}=x$ is nonpositive in $G_i$, i.e., $x_{i;t^\prime}$ has the form of
\begin{eqnarray}
x_{i;t^\prime}=G_i+\tilde G_i(z_{1;u},\cdots,z_{m;u})=c_iz_{1;u}^{-a_{1i}}\prod\limits_{l=2}^mz_{l;u}^{a_{li}}+\tilde G_i(z_{1;u},\cdots,z_{m;u}),\nonumber
\end{eqnarray}
where $\tilde G_i$ is a Laurent polynomial with positive coefficients, and $a_{1i}\geq 0$, $0\neq c_i\in \mathbb {NP}$ for some semifield $\mathbb P$.

Substituting $x_{1;t^\prime}=x=z_{1;u}$ and $x_{i;t^\prime}=c_iz_{1;u}^{-a_{1i}}\prod\limits_{l=2}^mz_{l;u}^{a_{li}}+\tilde G_i(z_{1;u},\cdots,z_{n;u})$ for $i\geq 2$ into $$z=cx_{1;t^\prime}^{-v_1}\prod\limits_{i=2}^nx_{i;t^\prime}^{v_i}+\tilde F(x_{1;t^\prime},\cdots,x_{n;t^\prime}),$$
we obtain the Laurent expansion of $z=z_{2;u}$ with respect to ${\bf z}_u$, which has the form of
\begin{eqnarray}
z_{2;u}=z&=&cz_{1;u}^{-v_1}\prod_{i=2}^n(c_iz_{1;u}^{-a_{1i}}\prod\limits_{l=2}^mz_{l;u}^{a_{li}})^{v_i}+R(z_{1;u},\cdots,z_{m;u})\nonumber\\
&=&c\prod_{i=2}^nc_iz_{1;u}^{-(v_1+v_2a_{12}+\cdots v_na_{1n})}\prod\limits_{l=2}^mz_{l;u}^{v_2a_{l2}+\cdots+v_na_{ln}}+
R(z_{1;u},\cdots,z_{m;u}),\nonumber
\end{eqnarray}
where $R$ can be written as $R=\frac{r_1(z_{1;u},\cdots,z_{m;u})}{r_2(z_{1;u},\cdots,z_{m;u})}$ with $r_1,r_2\in \mathbb {NP}[z_{1;u},\cdots,z_{m;u}]$ for some semifield $\mathbb P$.
Thus we get that
\begin{eqnarray}\label{eqnmain}
z_{2;u}-c\prod_{i=2}^nc_iz_{1;u}^{-(v_1+v_2a_{12}+\cdots v_na_{1n})}\prod\limits_{l=2}^mz_{l;u}^{v_2a_{l2}+\cdots+v_na_{ln}}=
\frac{r_1(z_{1;u},\cdots,z_{m;u})}{r_2(z_{1;u},\cdots,z_{m;u})}.
\end{eqnarray}
Note that there is a $\mathbb R$-algebra homorphism  $\varphi: \mathbb{RP}(z_{1;u},\cdots,z_{m;u})\rightarrow \mathbb R(z_{1;u},\cdots,z_{m;u})$ given by $$\varphi (a)=\begin{cases} 1&\text{if }a\in\mathbb P \;;\\ z_{p;u}&\text{if }a=z_{p;u}\;\text{ for some }p=1,\cdots,m.\end{cases}$$
So the equality (\ref{eqnmain}) in $\mathbb{RP}(z_{1;u},\cdots,z_{m;u})$ induces a equality in $\mathbb R(z_{1;u},\cdots,z_{m;u})$ by the action of the homorphism $\varphi$. The following is the new equality.
\begin{eqnarray}\label{eqnmain2}
z_{2;u}-\varphi(c\prod_{i=2}^nc_i)z_{1;u}^{-(v_1+v_2a_{12}+\cdots v_na_{1n})}\prod\limits_{l=2}^mz_{l;u}^{v_2a_{l2}+\cdots+v_na_{ln}}=
\frac{\varphi(r_1)(z_{1;u},\cdots,z_{m;u})}{\varphi(r_2)(z_{1;u},\cdots,z_{m;u})},
\end{eqnarray}
where $\varphi(r_1),\varphi(r_2)\in\mathbb N[z_{1;u},\cdots,z_{m;u}]$ and $\varphi(c\prod_{i=2}^nc_i)\geq 1$.

Since $v_1>0$ and $v_i, a_{1i}\geq 0$, we get that $v_1+v_2a_{12}+\cdots v_na_{1n}>0$. Take $z_{1;u}=1/2=2^{-1}$ and $z_{2;u}=z_{3;u}=\cdots=z_{m;u}=1$, the left side of the equality (\ref{eqnmain2}) is
$$1-\varphi(c\prod_{i=2}^nc_i)2^{(v_1+v_2a_{12}+\cdots v_na_{1n})}\leq 1-2^{(v_1+v_2a_{12}+\cdots v_na_{1n})}<0,$$
 but the right side of the equality (\ref{eqnmain2}) is  nonnegative in this case, by  $\varphi(r_1),\varphi(r_2)\in\mathbb N[z_{1;u},\cdots,z_{m;u}]$. This is a contradiction. So if $x$ and $z$ are $d$-compatible  in $\mathcal A(\mathcal S^\prime)$, they must be $d$-compatible in $\mathcal A(\mathcal S)$.

Similarly, we can show that if  $x$ and $z$ are $d$-compatible  in $\mathcal A(\mathcal S)$, they must be $d$-compatible in $\mathcal A(\mathcal S^\prime)$.

Thus  $x,z\in \mathcal X(\mathcal S)=\mathcal X(\mathcal S^\prime)$ are $d$-compatible in $\mathcal A(\mathcal S)$ if and only if  they are $d$-compatible  in $\mathcal A(\mathcal S^\prime)$. So a subset $M\subseteq \mathcal X(\mathcal S)=\mathcal X(\mathcal S^\prime)$ is a maximal $d$-compatible set in $\mathcal A(\mathcal S)$ if and only if  it is a maximal $d$-compatible set in $\mathcal A(\mathcal S^\prime)$. That is, a subset $M\subseteq \mathcal X(\mathcal S)=\mathcal X(\mathcal S^\prime)$ is a cluster in $\mathcal A(\mathcal S)$ if and only if  it is a cluster in $\mathcal A(\mathcal S^\prime)$, by Theorem \ref{thmcomp}. Hence, $\mathcal A(\mathcal S)$ and $\mathcal A(\mathcal S^\prime)$ have the same set of clusters.

Since the exchange graph of a cluster is uniquely determined by the set of clusters, we know that ${\bf EG}(\mathcal A(\mathcal S))={\bf EG}(\mathcal A(\mathcal S^\prime))$. This  completes the proof. \hfill$\Box$

\vspace{2mm}
Let $\mathcal A(\mathcal S)$ be a cluster algebra with coefficients in $\mathbb{ZP}$. A {\bf cluster automorphism} of $\mathcal A(\mathcal S)$ is a $\mathbb {ZP}$-automorphism of the algebra $\mathcal A(\mathcal S)$ mapping a cluster to a cluster and commuting with mutations.

\begin{Corollary}
Let $\mathcal A(\mathcal S)$ be a skew-symmetrizable cluster algebra. Then $f:\mathcal A(\mathcal S)\rightarrow \mathcal A(\mathcal S)$ is a cluster automorphism if and only if $f$ is an automorphism of the ambient field $\mathcal F$ which restricts to a permutation of the set of cluster variables.
\end{Corollary}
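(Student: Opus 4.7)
The plan is to deduce the corollary from the main theorem (Theorem \ref{mainthm}). Throughout, I read ``automorphism of $\mathcal F$'' as a $\mathbb{ZP}$-fixing field automorphism, matching the $\mathbb{ZP}$-linearity built into the definition of a cluster automorphism.

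The forward direction is essentially definitional. A cluster automorphism $f$ is by definition a $\mathbb{ZP}$-algebra automorphism of $\mathcal A(\mathcal S)$, and hence extends uniquely to a $\mathbb{ZP}$-automorphism of the fraction field $\mathcal F$. Because $f$ maps clusters bijectively to clusters and every cluster variable lies in some cluster, $f$ sends $\mathcal X(\mathcal S)$ into itself; applying the same argument to $f^{-1}$ then shows that the restriction is a permutation.

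For the reverse direction I would use unistructurality through a transport argument. Given a $\mathbb{ZP}$-fixing field automorphism $f$ of $\mathcal F$ that permutes $\mathcal X(\mathcal S)$, I would define a twisted cluster pattern $\mathcal S^f$ whose seed at each vertex $t \in \mathbb T_n$ is $(f({\bf x}_t), {\bf y}_t, B_t)$. Applying $f$ to the exchange relations of $\mathcal S$, and using that $f$ fixes the coefficients ${\bf y}_t \subset \mathbb P$, one checks that these seeds are related by the expected mutations; together with the fact that $f({\bf x}_t)$ is a free generating set of $\mathcal F$ over $\mathbb{ZP}$, this makes $\mathcal S^f$ a legitimate cluster pattern with ambient field $\mathcal F$. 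Its cluster variable set is $f(\mathcal X(\mathcal S))$, which equals $\mathcal X(\mathcal S)$ by hypothesis, so Theorem \ref{mainthm} applies to the pair $\mathcal A(\mathcal S)$ and $\mathcal A(\mathcal S^f)$ and yields a common cluster set and a common exchange graph.

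Since the clusters of $\mathcal A(\mathcal S^f)$ are by construction the $f$-images of clusters of $\mathcal A(\mathcal S)$, the equality of cluster sets means $f$ sends each cluster of $\mathcal A(\mathcal S)$ to a cluster of $\mathcal A(\mathcal S)$. Because cluster variables generate $\mathcal A(\mathcal S)$ over $\mathbb{ZP}$, $f$ restricts to a $\mathbb{ZP}$-algebra automorphism of $\mathcal A(\mathcal S)$; commutation with mutations is then immediate from the coincidence of the exchange graphs, since two clusters are mutation-adjacent exactly when they share $n-1$ cluster variables, a relation manifestly preserved by any permutation of $\mathcal X(\mathcal S)$. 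I do not expect any real obstacle beyond the routine verification that $\mathcal S^f$ is indeed a cluster pattern; the substantive work has already been absorbed into Theorem \ref{mainthm}.
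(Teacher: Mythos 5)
Your proposal is correct, but it takes a more self-contained route than the paper. The paper's entire proof is a two-line citation: it invokes \cite[Theorem 1.4]{ASV}, which already establishes the equivalence between cluster automorphisms and field automorphisms permuting $\mathcal X(\mathcal S)$ \emph{for unistructural cluster algebras}, and then applies Theorem \ref{mainthm}. You instead reprove the content of that cited result: the forward direction from the definition, and the reverse direction by building the twisted cluster pattern $\mathcal S^f$ with seeds $(f({\bf x}_t),{\bf y}_t,B_t)$, checking it is a genuine cluster pattern with $\mathcal X(\mathcal S^f)=f(\mathcal X(\mathcal S))=\mathcal X(\mathcal S)$, and then letting unistructurality force $f$ to carry clusters to clusters and to respect adjacency in the exchange graph. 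The essential input is identical in both cases --- Theorem \ref{mainthm} is doing all the real work --- but your version makes the corollary independent of \cite{ASV} and makes explicit where the hypothesis that $f$ fixes $\mathbb{ZP}$ (so that $f$ preserves exchange relations and $f({\bf x}_t)$ remains a free generating set over $\mathbb{ZP}$) is used; this is a point the paper leaves implicit in the statement, and your reading of ``automorphism of $\mathcal F$'' as $\mathbb{ZP}$-fixing is the right one for the coefficient setting of this paper. The only steps you compress are (a) that ``mapping a cluster to a cluster and commuting with mutations'' propagates to \emph{all} clusters by induction along $\mathbb T_n$, which justifies your claim that $f$ maps clusters bijectively to clusters, and (b) the routine verification that mutation adjacency is characterized by sharing $n-1$ cluster variables; both are standard and do not constitute gaps.
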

\begin{proof}
It is known that this result is true for unistructural cluster algebras by \cite[Theorem 1.4]{ASV}.  Then it follows from Theorem \ref{mainthm}.
\end{proof}

We know that the definition of compatibility degree function on the set of cluster variables mainly depends on the cluster structure of a cluster algebra. By Theorem \ref{mainthm}, the cluster structure of a cluster algebra is uniquely determined by the set of cluster variables, so the compatibility degree function is an  intrinsic function on the set of cluster variables. It would be interesting to give a geometric or categorial explanation for compatibility degree function in general. Note that for cluster algebras of finite type or  from surfaces, the compatibility degree function has nice explanation and one can refer to \cite{FZ0,FZ1,CP,Z,FST}.

\vspace{5mm}
{\bf Acknowledgements:}\; This project is supported by the National Natural Science Foundation of China (No.11671350 and No.11571173).


\end{document}